\theoremstyle{plain}
\newtheorem{theorem}{Theorem}[section]
\newtheorem{corollary}[theorem]{Corollary}
\title[Lacunary Recurrences for Lucas Numbers]{A family of lacunary recurrences for Lucas Numbers}
	\author[P. J. Mahanta]{Pankaj Jyoti Mahanta}
	\address{Gonit Sora, Dhalpur, Assam 784165, India} 
	\email{pankaj@gonitsora.com}
	\author[M. P. Saikia]{Manjil P. Saikia}
	\address{School of Mathematics, Cardiff University, Cardiff CF24 4AG, UK} 
	\email{SaikiaM@cardiff.ac.uk, manjil@gonitsora.com}
	\thanks{The second author is partially supported by the Leverhulme Trust Research Project Grant RPG-2019-083.}
\keywords{Lucas numbers, Fibonacci numbers, Pell numbers, lacunary recurrences.}
\date{\today}
\begin{document}

\begin{abstract}
We prove an infinite family of lacunary recurrences for the Lucas numbers using combinatorial means.
\end{abstract}

\maketitle

\section{Introduction}

A recurrence relation involving only terms of a given sequence with indices in arithmetic progression is called a \textit{lacunary recurrence}. The \textit{gap} of such a lacunary recurrence is the common difference in the indices in arithmetic progression. Several such lacunary recurrences are known for sequences including but not limited to Bernoulli numbers, Euler numbers, $k$-Fibonacci numbers, etc. We refer the reader to the recent paper of Ballantine and Merca \cite{merca} for relevant references and other examples.

Ballantine and Merca \cite{merca} proved an infinite family of lacunary recurrences for Fibonacci numbers. They closed the paper by asking the natural question of whether such an infinite family of lacunary recurrences can be found for the Lucas numbers. The aim of this article is to prove such an infinite family of lacunary recurrences. Before stating and proving our result, let us recall some definitions and relations. 

The Fibonacci sequence $\{F_n\}_{n\geq 0}$ defined by the recurrence relation
\[
F_n=F_{n-1}+F_{n-2},
\]
with $F_0=0$ and $F_1=1$. We use the convention $F_n=0$ when $n<0$. Similarly, the Lucas sequence $\{L_n\}_{n\geq 0}$ defined by the recurrence relation
\[
L_n=L_{n-1}+L_{n-2},
\]
with $L_0=2$ and $L_1=1$. We use the convention $L_n=0$ when $n<0$. These two sequences are related by the identity
\begin{equation}\label{eq:lucas}
    L_n=F_{n-1}+F_{n+1}.
\end{equation}

Several interesting relationships between Fibonacci numbers are known, two of them, which are relevant for this paper are
\begin{equation}\label{eq:1}
    F_{m+n}=F_{m}F_{n+1}+F_{m-1}F_{n},
\end{equation}
and
\begin{equation}\label{eq:2}
    (-1)^nF_{m-n}=F_{m}F_{n+1}-F_{m+1}F_n.
\end{equation}
For these and many other identities we refer the reader to Honsberger's book \cite[Chapter 8]{hons} and to the more recent book by Koshy \cite[Chapter 5]{koshy}. The second identity \eqref{eq:2} is called d'Ocagne's identity.

Lucas in 1876 proved a lacunary recurrence of gap $2$ for the Fibonacci numbers in the following equivalent form
\[
F_n=\frac{1+(-1)^n}{2}+F_{n-2}+\sum_{k=1}^{\lfloor \frac{n-1}{2}\rfloor}F_{n-2k}.
\]
This was generalized by Ballantine and Merca \cite{merca} to the following
\begin{theorem}\cite[Theorem 1]{merca}\label{thm:merca}
Given a positive integer $N\geq 2$, we have
\[
F_n=F_N\cdot F_{N-1}^{\lfloor \frac{n-1}{N}\rfloor +1}\cdot F_{(n-1)~\text{mod}~N}+F_{N+1}\cdot F_{n-N}+F_N^2\cdot \sum_{k=2}^{\lfloor {\frac{n-1}{N} \rfloor}}F_{N-1}^{k-2}\cdot F_{n-kN},
\]
for all $n\geq N$.
\end{theorem}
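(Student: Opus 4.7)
The plan is to iterate the Fibonacci addition identity \eqref{eq:1} along an arithmetic progression of step $N$, collecting a tail into a sum and reading off the residue at the end.

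As a first step, I would apply \eqref{eq:1} with the substitutions $(m,n)\mapsto(n-N,\,N)$ to obtain the one-step expansion
\[
F_n=F_{N+1}\,F_{n-N}+F_N\,F_{n-N-1},
\]
which already exhibits the middle summand $F_{N+1}F_{n-N}$ of the claim and isolates a ``remainder'' $F_N\,F_{n-N-1}$.

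Next, I would apply \eqref{eq:1} repeatedly in the form
\[
F_{n-jN-1}=F_N\,F_{n-(j+1)N}+F_{N-1}\,F_{n-(j+1)N-1}
\]
(this is \eqref{eq:1} with $(m,n)\mapsto(n-(j+1)N-1,\,N)$), substituting into the current remainder at each stage. A short induction on the number of substitutions $k$ then gives
\[
F_n=F_{N+1}F_{n-N}+F_N^{2}\sum_{j=2}^{k+1}F_{N-1}^{j-2}F_{n-jN}+F_N\,F_{N-1}^{k}\,F_{n-(k+1)N-1};
\]
each substitution promotes a new multiple of $F_{n-(j+1)N}$ into the running sum, adjoins one extra factor of $F_{N-1}$ to the running coefficient, and shortens the remainder index by $N$.

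I would stop the iteration at the largest $k$ for which the remainder index $n-(k+1)N-1$ still lies in $\{0,1,\dots,N-1\}$; this occurs precisely when $k+1=\lfloor (n-1)/N\rfloor$, at which point the leftover Fibonacci number is $F_{(n-1)\bmod N}$ carrying the appropriate accumulated power of $F_{N-1}$. Matching the upper limit of the sum with $\lfloor (n-1)/N\rfloor$ then reproduces the asserted identity. The algebra at each step is a direct instance of \eqref{eq:1}, so the main obstacle is purely bookkeeping: one has to track carefully the exponent of $F_{N-1}$ on the boundary term, the upper summation limit, and the identification of the residual index with $(n-1)\bmod N$, and to verify the degenerate case $n=N$ (empty sum, $(n-1)\bmod N=N-1$) in which the formula collapses to the single-step expansion above.
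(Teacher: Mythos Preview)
Your iteration of \eqref{eq:1} is the natural route, and each step you describe is a correct instance of that identity; the induction
\[
F_n=F_{N+1}F_{n-N}+F_N^{2}\sum_{j=2}^{k+1}F_{N-1}^{\,j-2}F_{n-jN}+F_N\,F_{N-1}^{\,k}\,F_{n-(k+1)N-1}
\]
is valid, and stopping at $k+1=\lfloor(n-1)/N\rfloor$ correctly identifies the residual Fibonacci index with $(n-1)\bmod N$. Note, however, that the paper does not itself prove this theorem: it is quoted from \cite{merca}, so there is no in-paper proof to compare against, beyond the remark in Section~\ref{sec:con} that the combinatorial argument of \cite{merca} transfers verbatim to Pell numbers. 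Your algebraic iteration is thus an alternative (and self-contained) derivation rather than a reproduction of the paper's method.

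One concrete point of bookkeeping you should not gloss over: when you halt at $k=\lfloor(n-1)/N\rfloor-1$, the boundary term carries the factor $F_{N-1}^{\,\lfloor(n-1)/N\rfloor-1}$, \emph{not} $F_{N-1}^{\,\lfloor(n-1)/N\rfloor+1}$ as printed in the statement. A quick check at $N=4$, $n=10$ (where $F_{N-1}=2$ so the exponent matters) gives $F_5F_6+F_4^2F_2+F_4F_3F_1=40+9+6=55=F_{10}$ with exponent $1=\lfloor 9/4\rfloor-1$, whereas the printed exponent $3$ would give $73$. So your argument is sound, but carrying it through exposes what appears to be a misprint in the displayed exponent; you should flag this rather than claim to ``reproduce the asserted identity'' verbatim.
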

\noindent This result is also valid for Pell numbers (as remarked in Section \ref{sec:con}).

It is quite natural to ask, as did Ballantine and Merca \cite{merca} if a similar result holds for the Lucas numbers? We now present such a result in the following theorem.
\begin{theorem}\label{thm:main}
Given a positive integer $N\geq 2$, we have
\begin{equation}\label{eq:main}
    L_n=L_N\sum_{i=1}^{d}{(-1)}^{(N+1)(i+1)}L_{n-(2i-1)N}+{(-1)}^{(N+1)(d+2)}L_{n-2dN},
\end{equation}
where $d=\left\lfloor \left\lfloor \dfrac{n}{N}\right\rfloor /2\right\rfloor$ and $\dfrac{n}{2}\geq N\geq 0$.
\end{theorem}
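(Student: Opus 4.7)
The plan is to establish a single two-term reduction for Lucas numbers that shifts the index by $2N$, and then iterate it. The key auxiliary identity I would prove first is the product-to-sum relation
\[
L_a L_b = L_{a+b} + (-1)^{b} L_{a-b}, \qquad a \geq b \geq 0.
\]
To derive this using only the tools in the excerpt, I would expand $L_{a+b}$ via \eqref{eq:lucas} and apply \eqref{eq:1} with index shifts $(a-1,b)$ and $(a+1,b)$ to get $L_{a+b}=L_a F_{b+1}+L_{a-1}F_b$. Analogously, \eqref{eq:lucas} and \eqref{eq:2} yield $(-1)^{b}L_{a-b}=L_a F_{b+1}-L_{a+1}F_b$. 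Adding these and simplifying via $L_{a+1}-L_{a-1}=L_a$ and $2F_{b+1}-F_b=F_{b+1}+F_{b-1}=L_b$ collapses the right-hand side to $L_a L_b$.

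Setting $a=n-N$ and $b=N$ specializes the auxiliary identity to the step recursion
\[
L_n = L_N L_{n-N} - (-1)^N L_{n-2N},
\]
valid for $n \geq 2N$. The theorem then follows by induction on $k$, for $1 \leq k \leq d$ with $d = \lfloor\lfloor n/N\rfloor/2\rfloor = \lfloor n/(2N) \rfloor$, of the intermediate statement
\[
L_n = L_N\sum_{i=1}^{k}(-1)^{(N+1)(i+1)}L_{n-(2i-1)N} + (-1)^{(N+1)(k+2)}L_{n-2kN}.
\]
The base case $k=1$ is precisely the step recursion, since $(-1)^{(N+1)\cdot 2}=1$ and $(-1)^{(N+1)\cdot 3}=-(-1)^{N}$. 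For the inductive step $k \to k+1$ (available when $k<d$, which guarantees $n-2kN\geq 2N$), apply the step recursion to the tail term $L_{n-2kN}$ and absorb the new $L_N L_{n-(2k+1)N}$ into the sum; the exponent update $(-1)^{N+1}(-1)^N=-1$ produces exactly the required coefficients $(-1)^{(N+1)((k+1)+1)}$ on the new summand and $(-1)^{(N+1)((k+1)+2)}$ on the new tail term.

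The assumption $n/2 \geq N$ guarantees $d \geq 1$, so the induction has a non-trivial base; termination at $k=d$ is forced because applying the step recursion once more would need $n-2(d+1)N \geq 0$, contradicting maximality of $d$. The main---if modest---obstacle is the sign bookkeeping with the factor $N+1$ in the exponent: one must verify that the coefficient formula is correct uniformly for both parities of $N$. Beyond this accounting, the argument is a direct iteration of a single identity that itself reduces to \eqref{eq:1}, \eqref{eq:2} and \eqref{eq:lucas}.
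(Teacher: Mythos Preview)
Your proposal is correct. The overall architecture---derive the step identity $L_n=L_N L_{n-N}+(-1)^{N+1}L_{n-2N}$ and then iterate it down to $k=d$---is exactly the paper's plan as well; the iteration you set up by induction matches the repeated substitution the paper performs after its equation \eqref{eq:a}.

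The genuine difference is in how that step identity is obtained. The paper proves it \emph{combinatorially}: it interprets $L_n$ and $L_N L_{n-N}$ as counts of $n$-bracelets and pairs of bracelets, partitions each side into in-phase/out-of-phase cases, and shows the discrepancy between the two counts is $(-1)^{N+1}L_{n-2N}$ by manipulating tilings with \eqref{eq:p} and \eqref{eq:q}. You instead prove the more general product formula $L_aL_b=L_{a+b}+(-1)^bL_{a-b}$ purely algebraically from \eqref{eq:lucas}, \eqref{eq:1}, \eqref{eq:2}, and then specialize $(a,b)=(n-N,N)$. Your route is shorter and entirely self-contained within the Fibonacci identities already quoted in the introduction; the paper's route is what justifies the word ``combinatorial'' in the title and gives a bijective explanation of the sign $(-1)^{N+1}$. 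Both arrive at the same recursion, and from there the arguments coincide. One small remark: your aside ``$(-1)^{N+1}(-1)^N=-1$'' is not the relevant exponent computation; what you actually need (and what your conclusion correctly states) is $(-1)^{(N+1)(k+2)}\cdot(-1)^{N+1}=(-1)^{(N+1)(k+3)}$.
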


A simple consequence of the above theorem is the following congruence.
\begin{corollary}
For a given integer $N\geq 2$ we have
\[
L_n-{(-1)}^{(N+1)(d+2)}L_{n-2dN} \equiv 0 ~ (\textup{mod}~ L_N),
\]
where $d=\left\lfloor \left\lfloor \dfrac{n}{N}\right\rfloor /2\right\rfloor$ and $\dfrac{n}{2}\geq N\geq 0$.
\end{corollary}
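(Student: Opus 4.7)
The plan is to read the corollary off directly from Theorem \ref{thm:main}: no new work is required beyond an algebraic rearrangement. Starting from the identity \eqref{eq:main}, I would transpose the trailing term $(-1)^{(N+1)(d+2)}L_{n-2dN}$ to the left-hand side, producing
\[
L_n - (-1)^{(N+1)(d+2)}L_{n-2dN} = L_N \sum_{i=1}^{d}(-1)^{(N+1)(i+1)}L_{n-(2i-1)N}.
\]
The right-hand side is an integer multiple of $L_N$, since each Lucas number $L_{n-(2i-1)N}$ is an integer (using the convention $L_m=0$ when $m<0$) and each coefficient $(-1)^{(N+1)(i+1)}$ is $\pm 1$. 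Reducing modulo $L_N$ gives the claimed congruence.

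The only points that merit a sentence of care are the boundary cases. When $d=0$ the sum on the right is empty and the congruence collapses to the tautology $L_n \equiv L_n \pmod{L_N}$, while the hypothesis $n/2 \geq N$ of Theorem \ref{thm:main} ensures that \eqref{eq:main} is legitimately available in every other case. There is no genuine obstacle: the corollary is a one-line consequence of the main theorem, and indeed this is the typical reason for isolating the leading and trailing terms of a lacunary recurrence in the form given by \eqref{eq:main}.
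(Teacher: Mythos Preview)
Your argument is correct and matches the paper's approach: the corollary is stated there simply as ``a simple consequence of the above theorem'' with no separate proof, and your rearrangement of \eqref{eq:main} is exactly the intended one-line deduction. (A minor remark: under the hypothesis $n/2\geq N\geq 2$ one has $d\geq 1$, so the $d=0$ case you discuss does not actually arise.)
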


\section{A Combinatorial Proof of Theorem \ref{thm:main}}

It is well-known that the Fibonacci numbers can be interpreted as tilings of an $1 \times n$ board with squares and dominoes. We call such a board an $n$-board. If the number of such tilings is $f_n$, then it can be proved that $F_{n+1}=f_n$ (see for instance, the book by Benjamin and Quinn \cite{proofs}). With this notation, equations \eqref{eq:1} and \eqref{eq:2} now becomes
\begin{equation}\label{eq:p}
    f_mf_n+f_{m-1}f_{n-1}=f_{m+n}
\end{equation}
and
\begin{equation}\label{eq:q}
    f_{m-1}f_n-f_mf_{n-1}={(-1)}^{n}f_{m-n-1}
\end{equation}
Both these identities can be easily proven using the combinatorial interpretation of $f_k$.

It is also known (see Chapter 2 of the book by Benjamin and Quinn \cite{proofs}) that the number $l_n$ of ways to tile a circular board composed of $n$ labelled cells with curved squares and dominoes is equal to $L_n$. We call such a tiling of the circular $n$-board to be an $n$-bracelet. There are two types of bracelets, an \textit{in-phase} or an \textit{out-of-phase}. A bracelet is out-of-phase if a domino covers the cells numbered $n$ and $1$, and it is called in-phase if it is not out-of-phase. An example of an out-of-phase $4$-bracelet and an in-phase $4$-bracelet is shown in Figure \ref{fig:ex}, where dominoes are coloured black and squares are white. We note that an in-phase tiling of an $n$-bracelet can be made into a tiling of an $n$-board. From this observation it is easy to see the validity of equation \eqref{eq:lucas}. We are now in a position to prove Theorem \ref{thm:main}.
\begin{figure}
\scalebox{0.9}{\includegraphics{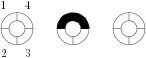}}
\caption{\label{fig:ex} Examples of bracelets.}
\end{figure}

\begin{proof}[Proof of Theorem \ref{thm:main}]
Let us draw two sets of circular boards as shown in Figure \ref{fig:1}, and call them Set 1 and Set 2. We mark the cells as
shown in Figure \ref{fig:1}. The number of bracelets of Set-1 is $L_n$ and of Set-2 is
$L_N\times L_{n-N}$, where $\frac{n}{2}\geq N\geq 0.$

\begin{figure}
\scalebox{0.14}{\includegraphics{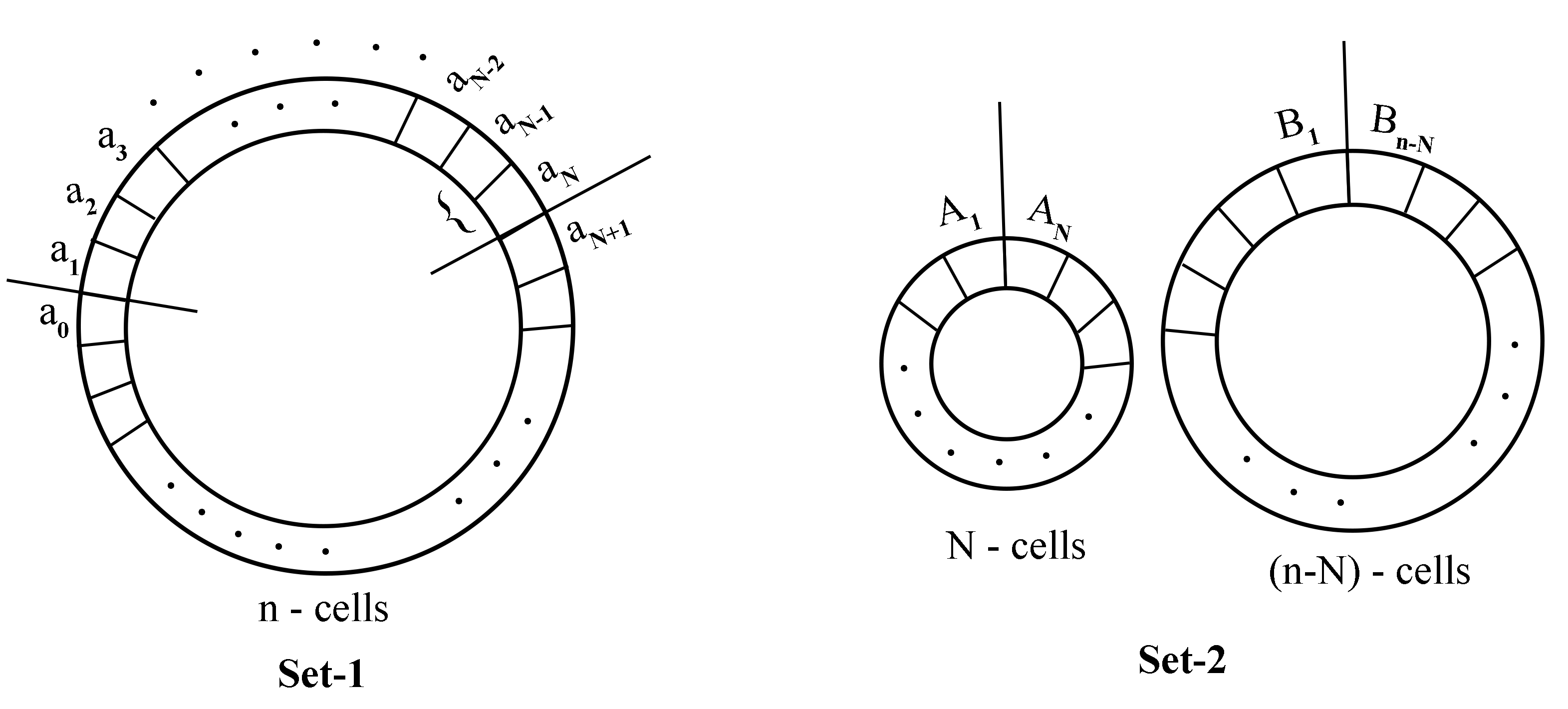}}
\caption{\label{fig:1} The two sets of bracelets considered in the proof of Theorem \ref{thm:main}.}
\end{figure}

We can break the tilings of Set-2 in the following four
parts:
\begin{itemize}
    \item[(a)] $f_N\times f_{n-N}.$ (Here both the $N$-bracelet and $(n-N)$-bracelets are in-phase.)
\item[(b)] $f_{N-2}\times f_{n-N}.$ (Here only the $N$-bracelet is
out-of-phase.)
\item[(c)] $f_N\times f_{n-N-2}.$ (Here only the $(n-N)$-bracelet is
out-of-phase.)
\item[(d)] $f_{N-2}\times f_{n-N-2}.$ (Here both are out-of-phase.)
\end{itemize}

Observe that the tilings of (a) can be made into tilings of the $n$-bracelet in such a way that the $N$-board covers the cells of the $n$-bracelet from
$a_1$ to $a_N.$ And hence the $(n-N)$-board covers the remaining
cells of the $n$-bracelet. In these tilings of the $n$-bracelet, there is no domino which covers the cells $a_0$ and $a_1$ or $a_N$ and $a_{N+1}$.

Observe that the tilings of (b) can be made into tilings of the $n$-bracelet in such a way that a domino covers the cells $a_0$ and $a_1$, and the $N$-board covers the cells from $a_0$ to $a_{N-1}.$ So, the
$(n-N)$-board covers the remaining cells of the n-bracelet. In these tilings no domino covers the cells
$a_{N-1}$ and $a_N.$

There are only two types of tilings that remains in the set of all tilings of the $n$-bracelet, apart from the ones discussed above:
\begin{itemize}
    \item[(1)] Tilings where one domino covers the cells $a_0$ and $a_1$ and another domino covers the cells $a_{N-1}$ and $a_N$. The total number of such tilings is $f_{N-3}f_{n-N-1}$.
\item[(2)] Tilings where one domino covers the cells $a_{N}$ and $a_{N+1}$, but no domino covers the cells $a_0$ and $a_1$. The total number of such tilings is $f_{n-2}-f_{N-2}f_{n-N-2}.$
\end{itemize}

Let us now compute the difference (say $A$) between the total tilings of (1) and (2) and that of the total tilings in (c) and (d)
\begin{multline*}
    A:=(f_{N-3}f_{n-N-1}+f_{n-2}-f_{N-2}f_{n-N-2})-(f_Nf_{n-N-2}+f_{N-2}f_{n-N-2})\\
    =f_{N-3}f_{n-N-1}+f_{(N-1)+(n-N-1)}-2f_{N-2}f_{n-N-2}-f_Nf_{n-N-2}.
\end{multline*}
Using equation \eqref{eq:p} above we get
\begin{align*}
     A=& f_{N-3}f_{n-N-1}+f_{N-1}f_{n-N-1}+f_{N-2}f_{n-N-2}-2f_{N-2}f_{n-N-2}-f_Nf_{n-N-2}\\
     =& -(f_{n-N-2}f_{N-2}-f_{n-N-1}f_{N-3})-(f_{n-N-2}f_N-f_{n-N-1}f_{N-1}).
\end{align*}
Using equation \eqref{eq:q} above we get
\begin{align*}
     A=& -{(-1)}^{N-2}f_{(n-N-1)-(N-2)-1}-{(-1)}^{N}f_{(n-N-1)-N-1}\\
     =& {(-1)}^{N-1}f_{n-2N}+{(-1)}^{N+1}f_{n-2N-2}\\
     =& {(-1)}^{N+1}L_{n-2N}.
\end{align*}
In the last step we used equation \eqref{eq:lucas}. Hence
\[
f_{N-3}f_{n-N-1}+f_{n-2}-f_{N-2}f_{n-N-2}=f_Nf_{n-N-2}+f_{N-2}f_{n-N-2}+{(-1)}^{N+1}L_{n-2N}.
\]

Finally, adding the total number of the other tilings (namely those in (a) and (b)) to both sides of the above we get
\begin{equation}\label{eq:a}
    L_n=L_N\times L_{n-N}+{(-1)}^{N+1}L_{n-2N}.
\end{equation}
The left hand side follows because the number of tilings in (1), (2), (a) and (b) is $L_n$, while the right hand side follows because the number of tilings in (a)--(d) is $L_N\times L_{n-N}$. Replacing $n$ by $n-2N$ in \eqref{eq:a}, we get
\begin{equation}\label{eq:b}
    L_{n-2N}=L_NL_{n-3N}+{(-1)}^{N+1}L_{n-4N}.
\end{equation}
Therefore, from equations \eqref{eq:a} and \eqref{eq:b}, we get
\[
L_n=L_NL_{n-N}+{(-1)}^{N+1}L_NL_{n-3N}+L_{n-4N}.
\]
Again,
\[
L_{n-4N}=L_NL_{n-5N}+{(-1)}^{N+1}L_{n-6N}.
\]
So, we have
\begin{align*}
    L_n =& L_NL_{n-N}+{(-1)}^{N+1}L_NL_{n-3N}+L_NL_{n-5N}+{(-1)}^{N+1}L_{n-6N}\\
    =& L_NL_{n-N}+{(-1)}^{N+1}L_NL_{n-3N}+L_NL_{n-5N}+{(-1)}^{N+1}L_NL_{n-7N}+L_{n-8N}\\
    =& L_N\left(L_{n-N}+{(-1)}^{N+1}L_{n-3N}+L_{n-5N}+{(-1)}^{N+1}L_{n-7N}\right)+L_{n-8N}.
\end{align*}
This gives us,
\begin{multline*}
    L_n=L_N\left({(-1)}^{(N+1)(1+1)}L_{n-N}+{(-1)}^{(N+1)(2+1)}L_{n-3N}\right.\\ \left.+{(-1)}^{(N+1)(3+1)}L_{n-5N}+{(-1)}^{(N+1)(4+1)}L_{n-7N}\right)+{(-1)}^{(N+1)(5+1)}L_{n-8N}.
\end{multline*}
We can proceed in this way up to the $\left\lfloor \left\lfloor \dfrac{n}{N}\right\rfloor /2\right\rfloor$-th step. This proves equation \eqref{eq:main}.
\end{proof}

\section{Concluding Remarks}\label{sec:con}
We can combine several known identities involving Lucas and Fibonacci numbers with Theorems \ref{thm:merca} and \ref{thm:main} to give several new results involving more complicated sums. We do not explore this here.

A generalization of the Fibonacci sequence, called the Gibonacci sequence $\{G_n\}_{n\geq 0}$ is given by the same recurrence
\[
G_n=G_{n-1}+G_{n-2},
\]
for all $n\geq 2$. Changing the initial conditions for $G_0$ and $G_1$ gives rise to different sequences, two of which are the Fibonacci and Lucas sequences. There exist combinatorial interpretations for such a Gibonacci sequence, which are similar to the interpretation for the Lucas sequence. It would seem that by tweaking our proofs, a more general lacunary recurrence could be found for the Gibonacci sequence. We leave this as an open problem.

Another remark is that, Theorem \ref{thm:merca} is actually valid also for Pell numbers. The sequence of Pell numbers $\{P_n\}_{n\geq 0}$ is given by the recurrence
\[
P_{n}=2P_{n-1}+P_{n-2},
\]
with $P_0=1$ and $P_1=1$. This can be seen from the combinatorial interpretation of Pell numbers given by Benjamin, Plott and Sellers \cite{pell}, and combining it with the proof of Ballantine and Merca \cite{merca} where the proof is independent of whether we use the combinatorial interpretation of the Fibonacci numbers or the Pell numbers. Thus, we have the following result.
\begin{theorem}
Given a positive integer $N\geq 2$, we have
\[
P_n=P_N\cdot P_{N-1}^{\lfloor \frac{n-1}{N}\rfloor +1}\cdot P_{(n-1)~\text{mod}~N}+P_{N+1}\cdot P_{n-N}+P_N^2\cdot \sum_{k=2}^{\lfloor {\frac{n-1}{N} \rfloor}}P_{N-1}^{k-2}\cdot P_{n-kN},
\]
for all $n\geq N$.
\end{theorem}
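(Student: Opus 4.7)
The plan is to adapt the combinatorial proof of Theorem \ref{thm:merca} given by Ballantine and Merca in \cite{merca}, with the Fibonacci tiling interpretation replaced by the Pell tiling interpretation of Benjamin, Plott and Sellers \cite{pell}. They show that $P_{n+1}$ equals the number of tilings of an $n$-board by squares in two distinct colours together with single-colour dominoes. Denote this count by $p_n$, so that $P_n=p_{n-1}$, $p_n=2p_{n-1}+p_{n-2}$, $p_0=1$ and $p_1=2$; the key structural feature is that the underlying tilings are still by squares and dominoes, only the squares are now bi-coloured.

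As a preliminary step I would verify that the two tiling identities underlying the Fibonacci proof, namely \eqref{eq:p} and \eqref{eq:q}, hold verbatim with $p$ in place of $f$:
\[
p_m p_n + p_{m-1} p_{n-1} = p_{m+n}, \qquad p_{m-1} p_n - p_m p_{n-1} = (-1)^{n} p_{m-n-1}.
\]
The first follows from the split-at-cell-$m$ bijection: a tiling of the $(m+n)$-board either breaks at cell $m$, contributing $p_m p_n$, or has a domino straddling cells $m,m+1$, contributing $p_{m-1}p_{n-1}$. The second is the standard signed tail-swapping argument of Benjamin--Quinn applied to pairs of tilings of length $m-1$ and $n$ versus $m$ and $n-1$. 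In both cases the two colour choices for each square cell multiply uniformly on both sides, so the Fibonacci arguments transfer to Pell tilings without modification.

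With these in hand, the main body of the proof is a line-for-line transcription of the Ballantine--Merca argument into the Pell setting. One iterates a split at cells $N, 2N, 3N, \dots$ on a tiling of an $(n-1)$-board: at each cut the tiling either breaks cleanly, producing a product of smaller tilings of a length-$N$ piece and a remainder, or a domino straddles the cut, shortening the remainder by one and contributing an extra factor from that domino. Collecting contributions using the two identities above yields the summands $P_N^{2}\cdot P_{N-1}^{k-2}\cdot P_{n-kN}$ for $k=2,\dots,\lfloor (n-1)/N\rfloor$, a $P_{N+1}\cdot P_{n-N}$ term from the first split, and a boundary term $P_N\cdot P_{N-1}^{\lfloor(n-1)/N\rfloor+1}\cdot P_{(n-1)\bmod N}$ from the base case in which the remaining board is too short to split further.

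The main obstacle, and really the only substantive verification, is the index bookkeeping: one must convert consistently between the tiling count $p_n$ and the Pell number $P_n=p_{n-1}$ throughout the iteration, and confirm that the stopping criterion produces the exponent $\lfloor(n-1)/N\rfloor+1$ on $P_{N-1}$ and the correct residue $P_{(n-1)\bmod N}$ in every congruence class of $n$ modulo $N$. Since the Ballantine--Merca proof draws on no feature of Fibonacci tilings beyond \eqref{eq:p} and \eqref{eq:q}, no new ideas beyond this careful transcription are needed.
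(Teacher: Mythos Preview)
Your proposal is correct and matches the paper's approach exactly: the paper does not give an independent proof but simply remarks that the Ballantine--Merca tiling argument for Theorem~\ref{thm:merca} is insensitive to whether squares come in one colour or two, so replacing the Fibonacci tiling interpretation by the Pell tiling interpretation of \cite{pell} yields the result. Your write-up supplies more detail than the paper does (in particular the explicit verification of the Pell analogues of \eqref{eq:p} and \eqref{eq:q} and the index bookkeeping), but the underlying idea is identical.
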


\section*{Acknowledgements}
The authors are grateful to the anonymous referee for her/his helpful comments.

\bibliographystyle{alpha}

\end{document}